\newtheorem{thm}{Theorem}
\newtheorem{prop}{Proposition}
\newtheorem{lem}{Lemma}
\theoremstyle{definition}
\newtheorem{defn}{Definition}
\newtheorem{ex}{Example}
\theoremstyle{remark}
\newtheorem{rem}{Remark}
\newtheorem*{nota}{Notation}
\newcommand{\bN}{\mathbb{N}}
\newcommand{\bR}{\mathbb{R}}
\newcommand{\cC}{\mathcal{C}}
\newcommand{\cE}{\mathcal{E}}
\newcommand{\cH}{\mathcal{H}}
\newcommand{\cL}{\mathcal{L}}
\newcommand{\cS}{\mathcal{S}}
\newcommand{\set}[1]{\left\{#1\right\}}
\newcommand{\norm}[1]{{\left\Vert #1 \right\Vert}}
\newcommand{\inp}[1]{{\left\langle #1 \right\rangle}}
\newcommand{\bv}{\mathbf{v}}
\DeclareMathOperator*{\diam}{diam}
\DeclareMathOperator*{\Hdim}{dim_H}
\DeclareMathOperator*{\Mdim}{dim_M}
\DeclareMathOperator*{\divv}{div}
\begin{document}
\baselineskip=18pt

\title[Singular points in NSE]{The Minkowski dimension of boundary singular points in the Navier--Stokes equations}

\author{Hi Jun Choe \& Minsuk Yang}

\address{Department of Mathematics, Yonsei University, 50 Yonsei-ro Seodaemun-gu, Seoul, Republic of Korea}

\email{choe@yonsei.ac.kr, m.yang@yonsei.ac.kr}

\begin{abstract}
We study the partial regularity problem of the three-dimensional incompressible Navier--Stokes equations.
We present a new boundary regularity criterion for boundary suitable weak solutions.
As an application, a bound for the parabolic Minkowski dimension of possible singular points on the boundary is obtained. \\

\noindent {\it Keywords:} 
Navier--Stokes equations,
suitable weak solutions,
singular points,
Minkowski dimension
\end{abstract}

\maketitle

\section{Introduction}
\label{S1}

In this paper, we study the Minkowski dimension of the possible boundary singular points of boundary suitable weak solutions to the three-dimensional incompressible Navier--Stokes equations
\begin{align*}
\partial_t \bv + (\bv \cdot \nabla) \bv - \Delta \bv + \nabla \pi &= 0 \\
\divv \bv &= 0
\end{align*}
where $\bv$ is the velocity field of the fluid and $\pi$ is the scalar pressure.
The global wellposedness of the three-dimensional incompressible Navier--Stokes equations is one of the most important open questions in PDEs.
The study of wellposedness problem for the Navier--Stokes equations has long history and huge literature.
Long ago, Leray \cite{Leray} answered the existence of weak solutions.  But we do not know the global regularity and uniqueness in general.
There are several sufficient conditions which imply the regularity and uniqueness.
The Ladyzhenskaya--Prodi--Serrin condition is one of the most famous conditions.
Roughly speaking, if the velocity field possesses higher integrability, then the weak solution is regular and unique.
However, we could not bridge the gap between the higher integrability condition and the known existence condition.

In the 1970s, Scheffer \cite{Sch} introduced the concept of suitable weak solutions to the Navier--Stokes equations and presented a local regularity criterion.
We say that a space-time point $z=(x,t)$ is singular if the velocity field is not continuous at the point.
Then an interesting question would be investigating possible singular points.
The contrapositive of each regularity criterion yields some information about the possible singularities.
Surprisingly, from the structure of the equations, too many singular points can not exist.
There are several concepts reflecting the geometric size or distribution of sparse sets.
They are the main topics in the field of fractal geometry.
We refer the reader to Falconer's book \cite{F} for a brief introduction of the fractal geometry.
The two of the most fundamental tools are the Hausdorff dimension and the Minkowski (box-counting) dimension.
The Hausdorff measure is a natural generalization of the Lebesgue measure and the Hausdorff dimension reflect the geometric size of sets.
Actually, using Scheffer's idea, one can deduce that the parabolic version of the Hausdorff dimension of the possible singular points of suitable weak solutions for the Cauchy problem is bounded by $5/3$.
Thus, the possible singular points turned out to be very sparse.
After that, Caffarelli, Kohn, and Nirenberg \cite{CKN} devised the regularity criterion that there is an absolute positive number $\varepsilon$ such that the velocity field is locally bounded at $z$ if 
\[
\limsup_{r \to 0} r^{-1} \int_{Q(z,r)} |\nabla \bv|^2 dyds < \varepsilon
\]
where $Q(z,r)$ is parabolic cylinders and $dyds$ is the Lebesgue measures.
This criterion can give better information about the singular points.
The contrapositive of this criterion yields that the one dimensional parabolic Hausdorff measure of the singular points is zero.
Lin \cite{Lin} gave a new short proof by an indirect argument.
Ladyzhenskaya and Seregin \cite{LS} gave a clear presentation that the same condition implies that the velocity field is H\"older continuous at the point.
Thus, the distribution of singular points is extremely rare in the space-time domain so that one can not continuously observe any singular point for any short period.
Furthermore, Choe and Lewis \cite{CL} lowered an upper estimate of the parabolic Hausdorff measure of the singular points by a logarithmic factor.

The Minkowski dimension $\Mdim (S)$ of a set $S$ is closely related to the Hausdorff dimension $\Hdim (S)$, but the Minkowski dimension reflects the geometric complexity of the distribution of the sets.
Moreover, a good control of the Minkowski dimension has a stronger implication by the well-known relation
\[
\Hdim (S) \le \Mdim (S).
\]
This inequality can be strict in many instances.
However, for many self-similar sets, the Minkowski dimension and the Hausdorff dimension are the same due to the relatively simple geometric structure of those sets. 
In the next section we shall give simple examples to illustrate the difference of the two concepts.
There are some studies on estimating the Minkowski dimension of possible (interior) singular points for the Cauchy problem or the bounded domain case.
Although Scheffer did not mention the Minkowski dimension of the singular set, from his result, one can conclude that the Minkowski dimension is bounded by $5/3$.
The regularity criterion proved by Caffarelli, Kohn, and Nirenberg can not be used to estimate the Minkowski dimension of the singular points.
Robinson and Sadowski \cite{RS} presented other general conditions related to the Minkowski dimension of the interior singular points.
Recently, there are some efforts in order to lower the bound for the Minkowski dimension.
For example, the bound $135/82$ was given in Kukavica \cite{Ku}, $45/29$ in Kukavica and Pei \cite{KP}, $95/63$ in Koh and Yang \cite{KY}, and $360/277$ in Wang and Wu \cite{WW}.
They are still quite larger than the bound for the Hausdorff dimension.

In mathematical fluid mechanics, the boundary behavior of flows are most difficult. 
Moreover, it is very important to know the analysis of boundary layer, boundary singularities, vortex analysis, and related physical parameters including the Reynold number. 
The main objective of this paper is to investigate the boundary possible singular points.
Recently, Seregin, Shilkin, and Solonnikov \cite{Se1, Se2, Se3, SS, SSS} studied the partial boundary regularity and extended some of the fundamental regularity criteria to the boundary cases.
Although the boundary is a lower dimensional manifold, the boundary regularity criteria only yields that the Hausdorff dimension of the boundary singular points is bounded by 1, which is the same bound for the interior singular points.
In this paper we investigate the Minkowski dimension of the boundary singular points.
Here is our main result.

\begin{thm}
\label{T}
The parabolic Minkowski dimension of any compact subset of the boundary singular points is bounded by $3/2$.
\end{thm}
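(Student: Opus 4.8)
The plan is to reduce the statement to a box-counting estimate and then to feed the new boundary $\varepsilon$-regularity criterion into a covering argument. Let $S$ be a compact subset of the boundary singular points and, for $r>0$, let $N(r)$ denote the maximal number of points of $S$ that are pairwise separated by parabolic distance at least $r$; up to a fixed multiplicative constant this equals the number of parabolic half-cylinders $Q^+(z,r)$ (centred at boundary points) needed to cover $S$. Since
\[
\Mdim(S)=\limsup_{r\to0}\frac{\log N(r)}{\log(1/r)},
\]
it suffices to prove $N(r)\le C r^{-3/2}$ for all small $r$.

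First I would invoke the boundary regularity criterion in its contrapositive form. It yields an absolute $\varepsilon>0$ such that every boundary singular point $z$ satisfies, \emph{at every} sufficiently small scale $\rho$, a scale-invariant lower bound
\[
\left(\frac{1}{\rho}\int_{Q^+(z,\rho)}|\nabla\bv|^2\,dy\,ds\right)^{\alpha}\left(\frac{1}{\rho^{5/3}}\int_{Q^+(z,\rho)}|\bv|^{10/3}\,dy\,ds\right)^{\beta}\ge\varepsilon,
\]
where the weights are forced by the two scaling constraints $\alpha+\beta=1$ and $\alpha+\tfrac53\beta=\tfrac32$, i.e. $\alpha=\tfrac14$ and $\beta=\tfrac34$. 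The decisive point — and the reason the bound can improve on the one coming from the classical gradient criterion — is that this estimate holds at a \emph{single} scale rather than only along some subsequence $\rho\to0$; consequently a common lower bound is available at one scale for all singular points simultaneously, which is exactly what a box-count requires.

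Next I would carry out the covering and summation. Fix a small $r$, choose a maximal $r$-separated set $\{z_1,\dots,z_N\}\subset S$ with $N=N(r)$, so that the half-cylinders $Q^+(z_i,r/2)$ are pairwise disjoint while $\{Q^+(z_i,r)\}$ covers $S$. Writing $g_i=\int_{Q^+(z_i,r/2)}|\nabla\bv|^2$ and $h_i=\int_{Q^+(z_i,r/2)}|\bv|^{10/3}$, the lower bound at scale $r/2$ reads $g_i^{\alpha}h_i^{\beta}\ge c\,\varepsilon\,r^{3/2}$. Summing over $i$ and applying H\"older's inequality with the conjugate exponents $1/\alpha$ and $1/\beta$, together with the disjointness,
\[
N\,c\,\varepsilon\,r^{3/2}\le\sum_{i}g_i^{\alpha}h_i^{\beta}\le\left(\sum_i g_i\right)^{\alpha}\left(\sum_i h_i\right)^{\beta}\le C\left(\int|\nabla\bv|^2\right)^{\alpha}\left(\int|\bv|^{10/3}\right)^{\beta}.
\]
The global energy bounds $\nabla\bv\in L^2$ and $\bv\in L^{10/3}$ for boundary suitable weak solutions over the bounded space--time region make the right-hand side a finite constant, whence $N(r)\le C r^{-3/2}$ and therefore $\Mdim(S)\le 3/2$.

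I expect the main obstacle to be entirely contained in the regularity criterion invoked in the second step: one must establish a genuinely \emph{single-scale} criterion at the boundary carrying the correct scale-invariant weight, whereas the naive gradient criterion is known only in the $\limsup$ form and by itself controls only the Hausdorff dimension. The delicate ingredients there are the local energy inequality near the flat boundary and the estimates for the pressure $\pi$, which must be performed in half-cylinders through the associated boundary value problem so that the pressure contribution (globally $\pi\in L^{5/3}$, matching the scaling of the velocity term) does not degrade the exponent $3/2$. Once such a criterion is in place, the covering and H\"older steps, together with the verification of disjointness and of the passage from local to global integrals, are routine.
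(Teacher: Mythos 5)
Your covering and summation step is sound and is essentially the one in the paper: the paper likewise takes the contrapositive of a single-scale criterion, extracts via the Vitali covering lemma a finite disjoint family of cylinders centred at singular points, sums the lower bounds $\gtrsim \cE r^{3/2}$, and dominates the sum by a finite global integral to get $N(r)\lesssim r^{-3/2}$. (The paper's criterion is in additive form, so it does not even need your H\"older step; your product form with exponents $1/4$, $3/4$ would work equally well \emph{if} the criterion you invoke were available.)

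The genuine gap is that criterion itself, which you correctly identify as ``the main obstacle'' but then postulate in a form that is not available and, with the tools of this paper, not provable: a \emph{scale-invariant}, single-scale condition involving \emph{only} the velocity,
\[
\Bigl(\rho^{-1}\!\int_{Q^+(z,\rho)}|\nabla\bv|^2\Bigr)^{1/4}\Bigl(\rho^{-5/3}\!\int_{Q^+(z,\rho)}|\bv|^{10/3}\Bigr)^{3/4}<\varepsilon .
\]
No such velocity-only one-scale criterion is known even in the interior: the pressure is nonlocal and cannot be controlled at a single scale by the velocity at that same scale, which is precisely why the interior Minkowski-dimension literature cited in the paper (Kukavica, Kukavica--Pei, Koh--Yang, Wang--Wu) keeps pressure quantities in the smallness hypothesis and labors to optimize their scaling weights; were your postulate true, it would render that entire line of work, and this paper, unnecessary. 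Accordingly, Proposition \ref{P} requires
\[
\int_{Q^+(z,\rho)} |\bv|^{10/3} + |\nabla \bv|^2 + |\pi-\inp{\pi}_{B^+(x,\rho)}|^{5/3} + |\nabla \pi|^{5/4}\,dxdt < \rho^{3/2}\cE,
\]
a condition that is deliberately \emph{not} scale-invariant: the exponent $3/2$ is a compromise among the terms' natural scalings $\rho^{5/3}$, $\rho$, $\rho^{5/3}$, $\rho^{5/4}$. Proving this criterion is the entire substance of the paper --- the boundary local energy inequality (Lemma \ref{L2}), the interpolation inequality (Lemma \ref{L3}), the boundary pressure-decay estimate via the Stokes decomposition and the estimates of Giga--Sohr and Seregin (Lemma \ref{L4}), assembled in a one-step iteration with the tuned choices $\alpha=7/6$, $\theta=\rho^{1/6}$ so that all three resulting terms are simultaneously small, and concluded through Seregin's boundary $\varepsilon$-regularity theorem (Lemma \ref{L1}). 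Your closing sentence gestures at these ingredients but carries out none of them; moreover, once the pressure terms are reinstated in the criterion, your summation must also use the global finiteness of $\int |\pi-\inp{\pi}|^{5/3}+|\nabla\pi|^{5/4}$ (the constant $K_5$ in the paper), not just the energy bounds. In short, the proposal reduces the theorem to exactly the statement whose proof constitutes the paper, while the velocity-only version you posit is strictly stronger than what is proved or known.
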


We end this section by mentioning a few remarks.
To the best of the authors knowledge, this theorem is the first result about the Minkowski dimension of the boundary singular points.
To prove the theorem, we present a special boundary regularity criterion that is Proposition \ref{P} in the last section.
We obtained the bound basically adapting the strategy developed in \cite{KY} with an idea in \cite{WW}.
But, the technical details are quite different from the interior analysis.
For the boundary analysis, we carefully chose several indices related to the integrability exponents of the pressure and the shrinking ratio of decaying estimates  of the scaled functionals.

\section{The Minkowski Dimension}
\label{S2}

In this section we give formal definitions of the Hausdorff dimension and the Minkowski dimension in a general metric space setting and then give two simple examples to illustrate the difference between the two concepts.
This short section can be safely skipped for the reader who is familiar with those concepts.

\begin{defn}[The Hausdorff dimension]
Given a set $S$ in a metric space $(X,d)$ and $\delta>0$, we denote by $\cC(S,\delta)$ the family of all countable coverings $\set{E_k}$ that covers $S$ with $\diam(E_k) \le \delta$, where the diameter of a set $E$ is defined by 
\[
\diam(E) := \sup\set{d(x,y) : x, y \in E}.
\]
Then the $\alpha$ dimensional Hausdorff measure is defined by 
\[
\cH^\alpha(S) = \lim_{\delta\to0} \inf\set{\sum_k \diam(E_k)^\alpha : \set{E_k} \in \cC(S,\delta)}
\]
and the Hausdorff dimension of the set $S$ is defined by
\[
\Hdim(S) = \inf\set{\alpha : \cH^\alpha(S)=0}.
\]
\end{defn}

\begin{defn}[The Minkowski dimension]
Given a set $S$ in a metric space $(X,d)$ and $\delta>0$, we denote by $N(S,\delta)$ the minimum number of all finite coverings $\set{E_k}$ that covers $S$ with $\diam(E_k) = \delta$.
Then the Minkowski dimension of the set $S$ is defined as
\[
\Mdim(S) = \lim_{\delta\to0} \frac{\log N(S,\delta)}{-\log \delta}.
\]
\end{defn}

The Minkowski dimension is closely related to the complexity of the geometric distribution.
For the most self-similar sets, the Hausdorff dimension and the Minkowski dimension are the same since the self-similar sets have some symmetries involving scaling and translation.
Here is the first example of this section.

\begin{ex}
For any $0 < \alpha < 1$ there is a compact set $C \subset [0,1]$ such that 
\[
\Hdim (C)=\Mdim (C)=\alpha.
\]
\end{ex}

We first construct $C$ as a Cantor-type set by inductively removing varying portions from the middle of each interval. 
Let $C_0 = I_0^1 = [0,1]$ and set 
\[
\delta_k = 2^{-k/\alpha} \quad \text{ for } k = 0,1,2,\cdots.
\]
Suppose the set $C_{k-1}$ has been constructed and satisfies 
\[
C_{k-1} = \bigcup_{j=1}^{2^{k-1}} I_{k-1}^j, \quad \diam(I_{k-1}^j) = \delta_{k-1}.
\]
We divide each interval $I_{k-1}^j$ into two closed intervals $I_k^{2j-1}$ and $I_k^{2j}$ by removing an open interval of length $\delta_{k-1}-2\delta_k$ from the middle of $I_{k-1}^j$ so that 
\[
C_k = \bigcup_{j=1}^{2^k} I_k^j, \quad \diam(I_k^j) = \delta_k.
\]
By continuing this process, we obtain the Cantor-type set
\[
C = \bigcap_{k=0}^\infty C_k.
\]

We now show that $\Hdim (C)=\Mdim (C)=\alpha$.
Clearly, the closed intervals $I_k^j$ consisting of $C_{k}$ cover the set $C$ so that 
\begin{equation}
\label{E24}
\cH^\alpha(C) 
= \lim_{k \to \infty} \sum_{j=1}^{2^k} \diam(I_k^j)^{\alpha} 
= \lim_{k \to \infty} 2^k \delta_k^\alpha = 1
\end{equation}
where $\cH^\alpha$ denote the $\alpha$-dimensional Hausdorff measure.
Thus, $\Hdim (C)=\alpha$.
On the other hand, we have 
\begin{equation}
\label{E25}
\Mdim (C) = \lim_{k \to \infty} \frac{\log 2^k}{- \log \delta_k} = \lim_{k \to \infty} \frac{\log 2^k}{- \log 2^{-k/\alpha}} = \alpha.
\end{equation}
In fact, some extra efforts are needed to check the validity of the first equalities of \eqref{E24} and \eqref{E25}, but we omit the details.

The second example shows that the Minkowski dimension can be much larger than the Hausdorff dimension because simple condensation break symmetry and increase complexity. 

\begin{ex}
For any $0 < \alpha < 1$ there is a compact set $S \subset [0,1]^2$ such that 
\begin{align}
\label{E21}
\Hdim (S) &= \alpha \\
\label{E22}
\Mdim (S) &= \alpha + \frac{1}{2}.
\end{align}
\end{ex}

Let $J = \set{0} \cup \set{n^{-1} : n \in \bN}$ and set
\[
S = C \times J \subset [0,1]^2
\]
where $C$ is the set in Example 1.
Then $\cH^\alpha(C \times \set{0}) > 0$ and for any positive number $\epsilon$
\[
\cH^{\alpha+\epsilon}(C \times \set{k^{-1}}) = 0
\]
and hence $\cH^{\alpha+\epsilon}(S)=0$ by the countable sub-additivity and the translation invariant property of Hausdorff measures.
This shows \eqref{E21}.

On the other hand, we fix $\frac{1}{k(k+1)} \le \delta < \frac{1}{k(k-1)}$, then $N(S,\delta) = N(C,\delta) N(J,\delta)$, so 
\[
\frac{\log N(S,\delta)}{-\log \delta} = \frac{\log N(C,\delta)+\log N(J,\delta)}{-\log \delta}.
\]
Since $k < N(J,\delta) < 2k$, we have 
\[
\Mdim (S) = \Mdim (C) + \lim_{k\to\infty} \frac{\log N(J,\delta)}{-\log k^{-2}} = \alpha + \frac{1}{2}.
\]
All the computations can be justified under the control of negligible errors.
By modifying the set $J$, one can construct a compact set $S \subset [0,1]^2$ satisfying $\dim_M(S) = \alpha + \beta$ for any $0 < \beta < 1$.

\section{Preliminaries}
\label{S3}

In this section we give the definitions of the parabolic Hausdorff dimension, the parabolic Minkowski dimension, and suitable weak solutions.
We also set up our shorthand-notations for complicated scaled functionals, which will be helpful to figure out clearly the iteration process in the proof.
We end this section by giving a simple lemma which is an immediate consequence of the fundamental regularity criterion in Seregin \cite{Se2}.

We denote space balls centered at $x$ and parabolic cylinders centered at a space-time point $z=(x,t)$ by
\begin{align*}
B(x,r) &= \set{y\in\bR^3:|y-x|<r}, \\
Q(z,r) &= B(x,r) \times (t-r^2,t).
\end{align*}

\begin{defn}[The parabolic Hausdorff dimension]
Given a set $S\subset\bR^3\times\bR$ and a positive number $\delta$, we denote by $\cC_p(S,\delta)$ the collection of all coverings of parabolic cylinders $\set{Q(z_k,r_k)}$ that covers the set $S$ with $0<r_k \le \delta$.
Then the $\alpha$ dimensional parabolic Hausdorff measure is defined by
\[
\cH_p^\alpha(S) = \lim_{\delta\to0} \inf \set{\sum_k r_k^\alpha : \set{Q(z_k,r_k)} \in \cC_p(S,\delta)}.
\]
The parabolic Hausdorff dimension of the set $S$ is defined by
\[
\Hdim(S) = \inf\set{\alpha : \cH_p^\alpha(S)=0}.
\]
\end{defn}

\begin{defn}[The parabolic Minkowski dimension]
Given a set $S\subset\bR^3\times\bR$ and a positive number $\delta$, we denote by $N(S,\delta)$ the minimum number of parabolic cylinders $\set{Q(z,\delta)}$ required to cover the set $S$.
Then the parabolic (upper) Minkowski dimension of the set $S$ is defined as
\begin{equation}
\label{E30}
\Mdim(S) = \limsup_{\delta\to0} \frac{\log N(S,\delta)}{-\log \delta}.
\end{equation}
\end{defn}

\begin{rem}
We use parabolic cylinders instead of arbitrary sets whose diameters are restricted in terms of the parabolic distance $d_p((x,t),(y,s)) = |x-y|+\sqrt{|t-s|}$.
\end{rem}

Now, we recall the definition of suitable weak solutions in Seregin and Shilkin \cite{SS}.
From the nature of the local regularity theory, we may consider the fixed domain $Q=B\times(-1,0)$ and $Q^+=B^+\times(-1,0)$ where $B = \set{x \in \bR^3 : |x|<1}$ and $B^+ = \set{x \in B : x_3>0}$.
We shall write $f \in \cL^{p,q}(Q)$ if
\begin{equation}
\norm{f}_{\cL^{p,q}(Q)} := \left(\int_{-1}^0\left(\int_B |f(x,t)|^p dx\right)^{q/p} dt\right)^{1/q} < \infty
\end{equation}
and simply put $L^p(Q) = \cL^{p,p}(Q)$.

\begin{defn}[Interior suitable weak solutions] 
A pair $(\bv,\pi)$ is called a (interior) suitable weak solution to the Navier--Stokes equations in $Q$ if the following three conditions are satisfied:
\begin{enumerate}
\item
$\bv \in \cL^{2,\infty}(Q)$, $\nabla\bv \in L^2(Q)$, and $\pi \in L^{3/2}(Q)$.
\item
$(\bv,\pi)$ satisfies the Navier--Stokes equations in $Q$ in the sense of distributions.
\item
$(\bv,\pi)$ satisfies the local energy inequality in $Q$
\begin{align*}
&\int_B |\bv(x,t)|^2 \phi(x,t) dx + 2 \int_{-1}^t \int_B |\nabla \bv|^2 \phi dx dt \\
&\le \int_{-1}^t \int_B |\bv|^2 (\partial_t \phi + \Delta \phi) + (|\bv|^2 + 2\pi) \bv \cdot \nabla \phi dx dt
\end{align*}
for almost all $t \in (-1,0)$ and for any non-negative $\phi \in C^\infty(\bR^3\times\bR)$ vanishing near the parabolic boundary $\partial B \times (-1,0) \cup B \times \set{t=-1}$.
\end{enumerate}
\end{defn}

\begin{defn}[Boundary suitable weak solutions]  
A pair $(\bv,\pi)$ is called a boundary suitable weak solution to the Navier--Stokes equations in $Q^+$ if the following four conditions are satisfied:
\begin{enumerate}
\item
$\bv \in \cL^{2,\infty}(Q^+)$, $\nabla\bv \in L^{2}(Q^+)$, and $\pi \in L^{3/2}(Q^+)$.
\item
$\bv|_{x_3=0}=0$ in the sense of traces.
\item
$(\bv,\pi)$ satisfies the Navier--Stokes equations in $Q^+$ in the sense of distributions.
\item
$(\bv,\pi)$ satisfies the local energy inequality in $Q^+$
\begin{equation}
\begin{split}
\label{E31}
&\int_{B^+} |\bv(x,t)|^2 \phi(x,t) dx + 2 \int_{-1}^t \int_{B^+} |\nabla \bv|^2 \phi dx dt \\
&\le \int_{-1}^t \int_{B^+} |\bv|^2 (\partial_t \phi + \Delta \phi) dxdt 
+ \int_{-1}^t \int_{B^+} |\bv|^2 \bv \cdot \nabla \phi dx dt
+ 2 \int_{-1}^t \int_{B^+} \pi \bv \cdot \nabla \phi dx dt
\end{split}
\end{equation}
for almost all $t \in (-1,0)$ and for any non-negative $\phi \in C^\infty(\bR^3\times\bR)$ vanishing near the parabolic boundary $\partial B \times (-1,0) \cup B \times \set{t=-1}$.
\end{enumerate}
\end{defn}

In the papers \cite{Se1, Se2} and \cite{SSS}, the definition of boundary suitable weak solutions is different.
It is supposed in addition that the second spatial derivatives and the first derivative in time of the velocity field and the gradient of the pressure exist as integrable functions in $Q^+$.
We adopt the definition in \cite{SS} and there it was pointed that extra regularity assumptions are simply superfluous.
We refer the reader to \cite{SS} for detailed explanation of the concept of solutions.
For notational convenience we shall use the following shorthand notations.

\begin{nota}
We denote $A \lesssim B$ if there exists a generic positive constant $C$ such that $|A| \le C|B|$. 
We denote the average value of $f$ on $E$ by 
\[
\inp{f}_{E} = \fint_E f d\mu = |E|^{-1} \int_E f dx
\]
where $|E|$ represents the three dimensional Lebesgue measure of the set $E$ in $\bR^3$.
\end{nota}

\begin{defn}[Scaled functionals]
Let $z=(x,t) \in \Gamma_T$ and define 
\begin{align*}
A(z,r) &= r^{-1} \sup_{|t-s|<r^2} \int_{B^+(x,r)} |\bv(y,s)|^2 dy \\
E(z,r) &= r^{-1} \int_{Q^+(z,r)} |\nabla \bv(y,s)|^2 dyds \\
F(z,r) &= r^{-4/3} \norm{\bv}_{L^{3}(Q^+(z,r))}^2 \\
G(z,r) &= r^{-1} \norm{\nabla \pi}_{\cL^{9/8,3/2}(Q^+(z,r))} \\
Y(z,r) &= F(z,r) + G(z,r).
\end{align*}
\end{defn}

\begin{rem}
We suppress the parameter $z$ when it is a fixed reference point and it can be understood obviously in the context.
\end{rem}

We end this section by giving the following lemma which is a direct consequence of fundamental regularity criterion in Seregin \cite{Se2}.

\begin{lem}
\label{L1}
There exists a positive constant $\varepsilon$ such that $\bv$ is regular at a boundary point $z \in \Gamma_T$ if $Y(z,R) < \varepsilon$ for some positive number $R$.
\end{lem}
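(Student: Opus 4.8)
The plan is to deduce the lemma from the boundary $\varepsilon$-regularity theorem of Seregin \cite{Se2}, which is a single-scale $\varepsilon$-regularity statement: there is an absolute constant $\varepsilon_0>0$ such that $\bv$ is regular at $z\in\Gamma_T$ provided the scale-invariant quantities
\[
C(z,r) = r^{-2}\int_{Q^+(z,r)} |\bv|^3\,dy\,ds, \qquad D(z,r) = r^{-2}\int_{Q^+(z,r)} \left|\pi - \inp{\pi}_{B^+(x,r)}\right|^{3/2}\,dy\,ds
\]
satisfy $C(z,r)+D(z,r)<\varepsilon_0$ for some admissible $r$. Only the oscillation of $\pi$ enters because the pressure is determined by $\bv$ up to an additive function of time, so subtracting the spatial average $\inp{\pi}_{B^+(x,r)}$ at each time is legitimate. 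Accordingly, it suffices to dominate $C(z,R)+D(z,R)$ by a power of $Y(z,R)$ with a constant independent of $R$, and then take $\varepsilon$ small.

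The velocity part is immediate from the definition of $F$, since by construction
\[
C(z,R) = \left(R^{-4/3}\norm{\bv}_{L^3(Q^+(z,R))}^2\right)^{3/2} = F(z,R)^{3/2}.
\]
The pressure part is the substantive step. Fixing the time $s$ and using the spatial Sobolev--Poincar\'e inequality $W^{1,9/8}(B^+(x,R)) \hookrightarrow L^{9/5}(B^+(x,R))$, whose constant is scale invariant on the half-ball, I would bound
\[
\norm{\pi(\cdot,s) - \inp{\pi}_{B^+(x,R)}}_{L^{9/5}(B^+(x,R))} \lesssim \norm{\nabla\pi(\cdot,s)}_{L^{9/8}(B^+(x,R))}.
\]
A H\"older inequality in space over $B^+(x,R)$, whose measure is comparable to $R^3$, then passes from the exponent $9/5$ to $3/2$ at the cost of a factor $R^{1/2}$, and integrating the resulting inequality in time reconstructs exactly the mixed norm appearing in $G$: the bookkeeping $(3/2)/(9/8)=4/3$ and $(9/8)(4/3)=3/2$ yields
\[
\norm{\nabla\pi}_{\cL^{9/8,3/2}(Q^+(z,R))}^{3/2} = \int_{-R^2}^0 \norm{\nabla\pi(\cdot,s)}_{L^{9/8}(B^+(x,R))}^{3/2}\,ds.
\]
Collecting the powers of $R$ gives $D(z,R) \lesssim R^{-3/2}\norm{\nabla\pi}_{\cL^{9/8,3/2}(Q^+(z,R))}^{3/2} = G(z,R)^{3/2}$.

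Combining the two bounds with the elementary inequality $a^{3/2}+b^{3/2}\le(a+b)^{3/2}$ for $a,b\ge0$ produces
\[
C(z,R)+D(z,R) \lesssim F(z,R)^{3/2}+G(z,R)^{3/2} \le Y(z,R)^{3/2},
\]
so choosing $\varepsilon$ with $C_0\,\varepsilon^{3/2}<\varepsilon_0$, where $C_0$ is the implicit constant, forces $C(z,R)+D(z,R)<\varepsilon_0$ whenever $Y(z,R)<\varepsilon$, and Seregin's theorem then delivers regularity at $z$. I expect the only genuine obstacle to be the pressure estimate: one must confirm that the Sobolev--Poincar\'e constant on $B^+(x,R)$ is truly independent of $R$ (that is, the scale invariance of the embedding and of the Poincar\'e constant on the half-ball), track the exponent bookkeeping carefully so that the mixed $\cL^{9/8,3/2}$ norm is reproduced with the correct power $R^{-3/2}$, and invoke the gauge freedom of the pressure to justify the subtraction of $\inp{\pi}_{B^+(x,R)}$. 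The velocity estimate and the final combination are routine once this is in place.
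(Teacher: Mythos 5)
Your proposal is correct and takes essentially the same route as the paper's own proof: both reduce the lemma to Seregin's one-scale boundary criterion from \cite{Se2}, observe that the velocity term is exactly $F(z,R)^{3/2}$, and control the pressure-oscillation term by $G(z,R)^{3/2}$ via H\"older in space together with the scale-invariant Sobolev--Poincar\'e embedding $W^{1,9/8}(B^+(x,R))\hookrightarrow L^{9/5}(B^+(x,R))$, finishing by choosing $\varepsilon$ so that the combined bound stays below the threshold. The only cosmetic differences are the order of the H\"older and Sobolev steps and your use of the superadditivity $a^{3/2}+b^{3/2}\le (a+b)^{3/2}$, where the paper simply absorbs the constant into $(1+C)\,Y(z,R)^{3/2}$.
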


\begin{proof}
From Seregin \cite{Se2} it is well-known that there exists a positive constant $\eta$ such that $\bv$ is regular at a boundary point $z \in \Gamma_T$ if for some positive number $R$
\[
R^{-2} \int_{Q^+(z,R)} |\bv(y,s)|^3 + |\pi(y,s)-\inp{\pi}_{B^+(x,R)}|^{3/2} dyds < \eta.
\]
Clearly, we have 
\[
R^{-2} \int_{Q^+(z,R)} |\bv|^3 = F(z,R)^{3/2}.
\]
By using the Young inequality and the Sobolev inequality, we also have 
\begin{align*}
&R^{-2} \int_{t-R^2}^t \int_{B^+(x,R)} |\pi-\inp{\pi}_{B^+(x,R)}|^{3/2} dyds \\
&\le R^{-2} \int_{t-R^2}^t \left(\int_{B^+(x,R)} 1 dy\right)^{1/6} \left(\int_{B^+(x,R)} |\pi-\inp{\pi}_{B^+(x,R)}|^{9/5} dy\right)^{5/6} ds \\
&\le C R^{-3/2} \int_{t-R^2}^t \left(\int_{B^+(x,R)} |\nabla \pi|^{9/8} dy\right)^{4/3} ds \\
&= C G(z,R)^{3/2}.
\end{align*}
Thus, we can take $\varepsilon = (1+C)^{-2/3} \eta^{2/3}$ so that 
\[
R^{-2} \int_{Q^+(z,R)} |\bv|^3 + |\pi-\inp{\pi}_{B^+(x,R)}|^{3/2} dyds \le (1+C) Y(z,R)^{3/2} 
< (1+C) \varepsilon^{3/2} = \eta.
\]
\end{proof}

\section{Auxiliary lemmas}
\label{S4}

In this section, we present a few inequalities among the scaled functionals, which are very important to complete iteration schemes and to obtain better bounds for the Minkowski dimension.
The first inequality is a direct consequence of the local energy inequality \eqref{E31}.

\begin{lem}
\label{L2}
There is a constant $K_1>1$ such that for any $z \in \Gamma_T$, $0<r<1$, 
\begin{align*}
&rA(z,r/2) + rE(z,r/2) \\
&\le K_1 r^{-1/2} \int_{Q^+(z,r)} |\bv|^{10/3} + |\nabla \bv|^2 + |\pi-\inp{\pi}_{B^+(x,r)}|^{5/3} + |\nabla\pi|^{5/4} dyds.
\end{align*}
\end{lem}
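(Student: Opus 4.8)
The plan is to derive the estimate directly from the local energy inequality \eqref{E31} by inserting a standard parabolic cutoff and then converting each forcing term into the four quantities on the right. First I would fix a smooth $\phi$ with $0\le\phi\le1$, $\phi\equiv1$ on $Q^+(z,r/2)$, $\mathrm{supp}\,\phi\subset Q^+(z,r)$, and the usual bounds $|\nabla\phi|\lesssim r^{-1}$, $|\partial_t\phi|+|\Delta\phi|\lesssim r^{-2}$; since $z\in\Gamma_T$ lies on the flat boundary, $\phi$ is admissible. Testing \eqref{E31}, discarding the nonnegative dissipation term where convenient, and taking the essential supremum in $t$ on the left (so that the kinetic and dissipation pieces reproduce exactly $rA(z,r/2)$ and $rE(z,r/2)$), I obtain
\[
rA(z,r/2)+rE(z,r/2)\lesssim r^{-2}\int_{Q^+(z,r)}|\bv|^2+r^{-1}\int_{Q^+(z,r)}|\bv|^3+r^{-1}\int_{Q^+(z,r)}|\pi-\inp{\pi}_{B^+(x,r)}|\,|\bv|,
\]
where I have already replaced $\pi$ by its oscillation. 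This replacement is legitimate because $\int_{B^+(x,r)}\bv\cdot\nabla\phi\,dy=-\int\phi\,\divv\bv=0$ for each fixed time, using $\divv\bv=0$ together with the boundary condition $\bv|_{x_3=0}=0$ and the vanishing of $\phi$ near the lateral boundary.

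The decisive tool, and the feature that separates the boundary case from the interior one, is the Poincar\'e inequality for functions vanishing on the flat boundary: since $\bv|_{x_3=0}=0$, one has $\int_{B^+(x,r)}|\bv|^2\le Cr^2\int_{B^+(x,r)}|\nabla\bv|^2$, hence $\int_{Q^+(z,r)}|\bv|^2\le Cr^2\int_{Q^+(z,r)}|\nabla\bv|^2$. This settles the first forcing term directly, $r^{-2}\int_{Q^+(z,r)}|\bv|^2\lesssim\int_{Q^+(z,r)}|\nabla\bv|^2\le r^{-1/2}\int_{Q^+(z,r)}|\nabla\bv|^2$ (using $r<1$), and it will also absorb the low-order residues produced below.

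For the cubic and pressure terms I would apply Young's inequality with splitting parameters tuned to powers of $r$, exploiting $|Q^+(z,r)|\sim r^5$. Writing $|\bv|^3=(|\bv|^{10/3})^{3/4}(|\bv|^2)^{1/4}$ and choosing the parameter as a suitable power of $r$ gives $r^{-1}\int|\bv|^3\lesssim r^{-1/2}\int|\bv|^{10/3}+r^{-5/2}\int|\bv|^2$, and the last term becomes $\lesssim r^{-1/2}\int|\nabla\bv|^2$ after Poincar\'e. For the pressure term I would first split $|\pi-\inp{\pi}_{B^+(x,r)}|\,|\bv|$ by Young into $|\pi-\inp{\pi}_{B^+(x,r)}|^{5/3}$ and $|\bv|^{5/2}$, then split $|\bv|^{5/2}=(|\bv|^{10/3})^{3/8}(|\bv|^2)^{5/8}$ once more, obtaining $r^{-1}\int|\pi-\inp{\pi}_{B^+(x,r)}|\,|\bv|\lesssim r^{-1/2}\int|\pi-\inp{\pi}_{B^+(x,r)}|^{5/3}+r^{-1/2}\int|\bv|^{10/3}+r^{-1/2}\int|\nabla\bv|^2$, again absorbing the leftover $|\bv|^2$ by Poincar\'e; if one prefers to express the pressure through its gradient, the oscillation may be controlled in $L^{5/4}$ terms of $\nabla\pi$ by the Sobolev--Poincar\'e inequality, which is the source of the $|\nabla\pi|^{5/4}$ term. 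Collecting the three estimates and taking $K_1>1$ large enough completes the proof.

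The hard part will be the bookkeeping of exponents and powers of $r$: the Young parameters must be chosen so that every term lands with exactly the prefactor $r^{-1/2}$ and so that each residual velocity term is precisely of the form $r^{-5/2}\int|\bv|^2$, which Poincar\'e converts into $r^{-1/2}\int|\nabla\bv|^2$ with no leftover volume constant. The pressure term is the most delicate, because a single application of Young cannot reach both the target exponent $5/3$ for the pressure and $10/3$ for the velocity at once; the two-step splitting, followed by Poincar\'e, is what makes the exponents close without generating an additive constant.
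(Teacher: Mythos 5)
Your proof is correct, and it takes a genuinely different route from the paper's at the two nonlinear terms. For the cubic term the paper uses a three-factor H\"older inequality in space, $\int|\bv|^3 \le (\int 1)^{1/6}(\int|\bv|^{10/3})^{3/4}(\int|\bv|^6)^{1/12}$, followed by the Sobolev inequality $\norm{\bv}_{L^6}\lesssim\norm{\nabla\bv}_{L^2}$ (valid since $\bv$ vanishes on the flat boundary) and Young; your pointwise weighted Young plus boundary Poincar\'e achieves the same bound and is essentially equivalent, both exploiting the vanishing trace. The real divergence is the pressure term: the paper splits $|\bv||\pi-\inp{\pi}|$ by H\"older into $(\int|\bv|^{10/3})^{3/10}(\int|\pi-\inp{\pi}|^{5/3})^{3/10}(\int|\pi-\inp{\pi}|^{5/4})^{4/10}$ and gains the crucial factor $r^{1/2}$ through the Poincar\'e inequality for the pressure, $\norm{\pi-\inp{\pi}}_{L^{5/4}(B^+)}\lesssim r\norm{\nabla\pi}_{L^{5/4}(B^+)}$, which is exactly how the $|\nabla\pi|^{5/4}$ term enters the right-hand side of the lemma. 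You instead gain the half-power of $r$ twice by weighted Young (your exponent bookkeeping checks out: the first split with weights tuned so that the pressure factor carries $r^{-1/2}$ leaves the residual $r^{-7/4}\int|\bv|^{5/2}$, and the $(3/8,5/8)$ split then yields $r^{-1/2}\int|\bv|^{10/3}+r^{-5/2}\int|\bv|^2$) and absorb the last piece via the boundary Poincar\'e inequality for $\bv$, whose constant is scale-invariant and absolute since $\bv$ vanishes on the flat face of $B^+(x,r)$. The net effect is that your argument never uses $|\nabla\pi|^{5/4}$ at all: it proves the formally stronger inequality with only three terms on the right, and since the omitted term is nonnegative, Lemma \ref{L2} as stated follows a fortiori. (That term is of course indispensable elsewhere in the paper, in Lemma \ref{L4} via the functional $G$ and in Step 2 of the proof of Proposition \ref{P}, but your route shows Lemma \ref{L2} itself does not need it.) Your justification for subtracting the mean, namely $\int_{B^+(x,r)}\bv\cdot\nabla\phi\,dy=0$ from $\divv\bv=0$ together with $\phi$ vanishing near the lateral boundary and $v_3=0$ on the flat face, coincides with the paper's, and the factor-of-two handling of the essential supremum in $t$ in the local energy inequality is standard and unproblematic.
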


\begin{proof}
We shall use a smooth cutoff function $\phi$ supported in $Q(z,r)$ and fulfilled $\phi = 1$ in $Q(z,r/2)$ and 
\[|\partial_t\phi|+|\nabla^2\phi|+|\nabla\phi|^2 \le 100 r^{-2} \quad \text{ in } Q(z,r).\]
Since $\bv$ vanishes on the boundary, we use the Poincar\'e inequality in any time to estimate the first integral on the right of \eqref{E31} as
\[
\int_{Q^+(z,r)} |\bv|^2 (\partial_t \phi + \Delta \phi) 
\lesssim \int_{Q^+(z,r)} |\nabla \bv|^2 
\]
where we omit the symbol $dy ds$ representing the Lebesgue measures. 
Since $\bv$ vanishes on the boundary, we estimate the second integral on the right of \eqref{E31} as
\begin{align*}
&\int_{t-r^2}^t \int_{B^+(x,r)} |\bv|^2 \bv \cdot \nabla \phi \\
&\lesssim r^{-1} \int_{t-r^2}^t \left(\int_{B^+(x,r)} 1\right)^{1/6} \left(\int_{B^+(x,r)} |\bv|^{10/3}\right)^{3/4} \left(\int_{B^+(x,r)} |\bv|^{6}\right)^{1/12} \\
&\lesssim r^{-1/2} \int_{t-r^2}^t \left(\int_{B^+(x,r)} |\bv|^{10/3}\right)^{3/4} \left(\int_{B^+(x,r)} |\nabla \bv|^2\right)^{1/4} \\
&\lesssim r^{-1/2} \int_{t-r^2}^t \int_{B^+(x,r)} |\bv|^{10/3} + |\nabla \bv|^2
\end{align*}
by the H\"older inequality, the Sobolev inequality, and the Young inequality.
Since $\divv \bv=0$, we can subtract an average $\inp{\pi} := \inp{\pi}_{B^+(x,r)}$ from the integrand of the third integral of \eqref{E31} to get the last estimate in the lemma.
Indeed, we have
\begin{align*}
&\int_{t-r^2}^t \int_{B^+(x,r)} \pi \bv \cdot \nabla \phi \\
&\lesssim r^{-1} \int_{t-r^2}^t \int_{B^+(x,r)} |\bv| |\pi-\inp{\pi}| \\
&\lesssim r^{-1}  \int_{t-r^2}^t \left(\int_{B^+(x,r)} |\bv|^{10/3} \right)^{3/10} 
\left(\int_{B^+(x,r)} |\pi-\inp{\pi}|^{5/3} \right)^{3/10} 
\left(\int_{B^+(x,r)} |\pi-\inp{\pi}|^{5/4} \right)^{4/10} \\
&\lesssim r^{-1/2}  \int_{t-r^2}^t \left(\int_{B^+(x,r)} |\bv|^{10/3} \right)^{3/10} 
\left(\int_{B^+(x,r)} |\pi-\inp{\pi}|^{5/3} \right)^{3/10} 
\left(\int_{B^+(x,r)} |\nabla\pi|^{5/4} \right)^{4/10} \\
&\lesssim r^{-1/2} \int_{t-r^2}^t \int_{B^+(x,r)} |\bv|^{10/3} + |\pi-\inp{\pi}|^{5/3} + |\nabla\pi|^{5/4} 
\end{align*}
by the H\"older inequality, the Poincar\'e inequality, and the Young inequality.
We notice that all implied constants in this proof are absolute.
\end{proof}

The second inequality is the following interpolation inequality.

\begin{lem}
\label{L3}
There is a constant $K_2>1$ such that for any $z \in \Gamma_T$, $0<r \le 1$, and $0<\theta \le 1$, 
\[
F(z,\theta r) \le K_2 \theta^{-1} A(z,r)^{1/3} E(z,r)^{2/3}.
\]
\end{lem}

\begin{proof}
We may assume $x=0$ and $r=1$.
Using the Young inequality and the Sobolev inequality we get 
\begin{align*}
\int_{B^+(\theta)} |\bv|^3 
&\le \left(\int_{B^+(\theta)} 1 dy\right)^{1/6} 
\left(\int_{B^+(\theta)} |\bv|^2\right)^{1/2} 
\left(\int_{B^+(\theta)} |\bv|^6\right)^{1/3} \\
&\le \theta^{1/2}
\left(\int_{B^+(1)} |\bv|^2\right)^{1/2} 
\left(\int_{B^+(1)} |\bv|^6\right)^{1/3} \\
&\lesssim \theta^{1/2} A(z,1)^{1/2} 
\int_{B^+(1)} |\nabla \bv|^2.
\end{align*}
Integrating in time over $(t-\theta^2,t)$ we obtain
\begin{align*}
\left(\int_{t-\theta^2}^t \int_{B^+(\theta)} |\bv|^3 dy ds\right)^{2/3} 
&\lesssim \left(\theta^{1/2} A(z,1)^{1/2} 
\int_{t-1}^t \int_{B^+(1)} |\nabla \bv|^2\right)^{2/3} \\
&\le \theta^{1/3} A(z,1)^{1/3} E(z,1)^{2/3}.
\end{align*}
Multiplying by $\theta^{-4/3}$ yields the result.
We notice that all implied constants in the proof are absolute.
\end{proof}

The third inequality is the following decay estimate for the pressure, which is a modification of Lemma 11 in Gustafson, Kang, and Tsai \cite{GKT} (see also Seregin \cite{Se1} and Seregin, Shilkin, and Solonnikov \cite{SSS}).

\begin{lem} 
\label{L4}
There is a constant $K_3>1$ such that for any $z \in \Gamma_T$, $0 < r \le 1$, and $0<\theta \le 1/4$, 
\begin{align*}
G(z,\theta r) 
&\le K_3 \theta r^{-2} \norm{\pi-\inp{\pi}}_{\cL^{9/8,3/2}(Q^+(z,r))} \\
&\quad + K_3 \theta E(z,r)^{1/2} + K_3 \theta^{-1} A(z,r)^{1/3} E(z,r)^{2/3}.
\end{align*}
\end{lem}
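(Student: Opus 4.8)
The plan is to read off a pointwise elliptic equation for $\pi$ from the Navier--Stokes system together with the no-slip condition, and then decompose $\pi$ into a nonlinear part, a boundary part, and a harmonic remainder, estimating each separately. Since $A$, $E$, and $G$ are all invariant under the natural parabolic/Navier--Stokes scaling, the asserted inequality is scale invariant, so I may normalize $x=0$, $r=1$, $z=(0,0)$ (the first term then reads $\theta\norm{\pi-\inp{\pi}}_{\cL^{9/8,3/2}(Q^+(0,1))}$). Taking the divergence of the momentum equation gives $-\Delta\pi=\partial_i\partial_j(v_iv_j)$ in $B^+(0,1)$, while the $x_3$-component of the equation on the flat face $\set{x_3=0}$, combined with $\bv|_{x_3=0}=0$ and $\divv\bv=0$, yields the Neumann condition $\partial_3\pi=\partial_3^2v_3=-\partial_\alpha\partial_3 v_\alpha$ there (summing over the tangential indices $\alpha=1,2$). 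Fixing a cutoff $\chi$ with $\chi=1$ on $B(0,1/2)$ and supported in $B(0,1)$, I split $\pi=\pi_1+\pi_2+h$, where $\pi_1$ solves the half-space Neumann problem with interior source $\chi\,\partial_i\partial_j(v_iv_j)$ and zero boundary data, $\pi_2$ solves it with zero source and boundary data $\chi\,\partial_3^2v_3$, and $h:=\pi-\pi_1-\pi_2$ is consequently harmonic in $B^+(0,1/2)$ with $\partial_3 h=0$ on the flat face.

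First I would treat the harmonic remainder $h$. By even reflection across $\set{x_3=0}$ it extends to a harmonic function on $B(0,1/2)$, so each component of $\nabla h$ is harmonic and the mean value property gives $\sup_{B^+(0,1/4)}|\nabla h(\cdot,s)|\lesssim\norm{h(\cdot,s)-\inp{h}}_{L^{9/8}(B^+(0,1/2))}$ uniformly in $s$. Inserting this pointwise bound into the $\cL^{9/8,3/2}$ norm over the small cylinder and tracking the volume factors gives $\norm{\nabla h}_{\cL^{9/8,3/2}(Q^+(0,\theta))}\lesssim\theta^{8/3}\norm{h-\inp{h}}_{\cL^{9/8,3/2}(Q^+(0,1/2))}$, so its contribution to $G(0,\theta)=\theta^{-1}\norm{\nabla\pi}_{\cL^{9/8,3/2}(Q^+(0,\theta))}$ is of size $\theta^{5/3}\norm{h-\inp{h}}\le\theta\norm{h-\inp{h}}$ since $\theta\le1/4$. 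Finally I would use $\norm{h-\inp{h}}\le\norm{\pi-\inp{\pi}}+\norm{\pi_1-\inp{\pi_1}}+\norm{\pi_2-\inp{\pi_2}}$ on $Q^+(0,1/2)$ and absorb the last two quantities into the $\pi_1,\pi_2$ bounds below, which produces the first term $K_3\theta\,\norm{\pi-\inp{\pi}}_{\cL^{9/8,3/2}(Q^+(0,1))}$.

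Next is the interior part $\pi_1$. Since $\pi_1$ is, up to the cutoff, the Newtonian potential of $\partial_i\partial_j(v_iv_j)$, its gradient is a Calderón--Zygmund operator applied to $\bv\nabla\bv$, whence $\norm{\nabla\pi_1}_{\cL^{9/8,3/2}}\lesssim\norm{\bv\nabla\bv}_{\cL^{9/8,3/2}}$. By Hölder in space ($\tfrac{7}{18}+\tfrac12=\tfrac89$) and in time ($\tfrac16+\tfrac12=\tfrac23$) this is $\lesssim\norm{\bv}_{L^6_sL^{18/7}_y}\norm{\nabla\bv}_{L^2(Q^+(0,1))}$, and interpolating $\norm{\bv}_{L^\infty_sL^2_y}^{2/3}\norm{\bv}_{L^2_sL^6_y}^{1/3}$ together with the Sobolev inequality (available because $\bv$ vanishes on the flat face, exactly as in Lemma \ref{L3}) gives $\norm{\nabla\pi_1}_{\cL^{9/8,3/2}(Q^+(0,1))}\lesssim A(0,1)^{1/3}E(0,1)^{2/3}$. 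Bounding the small-cylinder norm by the large one and dividing by $\theta$ produces the third term $K_3\theta^{-1}A^{1/3}E^{2/3}$; the absence of any decay gain for this singular-integral part is precisely what forces the factor $\theta^{-1}$.

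I expect the boundary part $\pi_2$ to be the main obstacle, and it is the genuinely new ingredient beyond the interior estimate of \cite{GKT}. The datum $\partial_3^2v_3$ carries two derivatives of $\bv$ and is not controlled by $E$; the remedy is to write it as $-\partial_\alpha\partial_3 v_\alpha$ and integrate by parts tangentially against the half-space Neumann kernel, transferring $\partial_\alpha$ onto the kernel and leaving the first-order quantity $\partial_3 v_\alpha$, which lies in $L^2$ and is measured by $E^{1/2}$. Evaluating the resulting single-layer-type potential on the shrinking cylinder $Q^+(0,\theta)$ and exploiting the smoothing of the kernel away from the support then yields the decay factor $\theta$, giving the second term $K_3\theta\,E^{1/2}$. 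The delicate points are keeping the precise integrability exponents $(9/8,3/2)$ intact through the kernel estimates and verifying that the tangential integration by parts generates no uncontrolled boundary contributions; this is where the boundary analysis departs most sharply from the interior case. Collecting the three contributions and choosing $K_3$ larger than all the implied absolute constants completes the proof.
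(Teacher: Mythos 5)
Your decomposition rests on treating the pressure as a solution, time slice by time slice, of an elliptic Neumann problem, and the argument breaks at exactly the point you flag as delicate: the boundary part $\pi_2$. After your tangential integration by parts, what remains is a single-layer potential whose density is the \emph{trace} of $\partial_3 v_\alpha$ on $\set{x_3=0}$, and this trace is not controlled by $E(z,r)$. The functional $E$ bounds $\nabla\bv$ only in $L^2$ of the space--time half-cylinder, and $L^2$ functions have no trace on a hypersurface; making the datum $\partial_3 v_\alpha|_{x_3=0}$ meaningful would require roughly half a derivative more, i.e.\ control of second derivatives of $\bv$, which is not available from $A$ and $E$. Writing $\partial_3^2 v_3=-\partial_\alpha\partial_3 v_\alpha$ and moving $\partial_\alpha$ onto the kernel does not cure this: the boundary integral still involves the undefined trace, so your claim that the remaining quantity ``lies in $L^2$ and is measured by $E^{1/2}$'' conflates the interior $L^2(Q^+)$ norm with a boundary norm. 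This is not a technicality but the known obstruction to purely elliptic treatments of the pressure near the boundary: the boundary behavior of the Stokes pressure is genuinely coupled to the evolution, and Kang \cite{Kang} shows that even for the homogeneous Stokes system with good data the normal derivative of the velocity can be unbounded at the boundary, so no time-slice elliptic estimate of the kind you propose can hold. (Your treatments of $\pi_1$ via Calder\'on--Zygmund and of the harmonic remainder $h$ by even reflection are fine in themselves, but the $h$ step also presupposes that $\pi_2$ has been estimated, so the whole scheme collapses with it.)

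The paper avoids this by never touching a time-slice elliptic problem. It decomposes via the \emph{nonstationary} Stokes system: $(\bv_1,\pi_1)$ solves the initial--boundary value problem with force $-(\bv\cdot\nabla)\bv$, and maximal regularity (Giga--Sohr \cite{GS}) bounds $\nabla\bv_1$, $\pi_1$, $\nabla\pi_1$ in $\cL^{9/8,3/2}$ by $\norm{(\bv\cdot\nabla)\bv}_{\cL^{9/8,3/2}}\lesssim A^{1/3}E^{2/3}$ — this recovers your third term, much as in your $\pi_1$ analysis. The remainder $(\bv_2,\pi_2)=(\bv-\bv_1,\pi-\inp{\pi}-\pi_1)$ then solves the \emph{homogeneous} Stokes system near the flat boundary, and Seregin's local boundary estimate (Proposition 2 of \cite{Se1}) gives the improved spatial integrability $\nabla\pi_2\in\cL^{9/2,3/2}$ in terms of $\cL^{9/8,3/2}$ norms of $\pi_2$, $\bv_2$, $\nabla\bv_2$ on a larger cylinder; H\"older's inequality on the small cylinder $Q^+(z,\theta r)$ then converts the gain from $9/8$ to $9/2$ into the decay factor $\theta^2$, producing the first two terms after dividing by $\theta$. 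In other words, the regularity your single-layer potential was supposed to supply is obtained instead from parabolic boundary regularity of the linear Stokes system, where the velocity--pressure coupling provides exactly the smoothing that the elliptic Neumann problem cannot. If you want to salvage your outline, you would have to replace the Neumann potential for $\pi_2$ by this Stokes-theoretic input, at which point you have reproduced the paper's proof.
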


\begin{proof}
We may assume $x=0$ and $r=1$.
We fix a smooth domain $\frac{1}{2}B^+ \subset \widetilde{B}^+ \subset B^+$ and denote $\widetilde{Q}^+ = \widetilde{B}^+ \times (t-1,t)$.
Let $(\bv_1,\pi_1)$ be the unique solution to the initial boundary value problem for the Stokes system
\begin{align*}
\partial_t \bv_1 - \Delta \bv_1 + \nabla \pi_1 &= -(\bv \cdot \nabla) \bv \\
\divv \bv_1 &= 0
\end{align*}
in $\widetilde{Q}^+$ with $|\widetilde{B}^+|^{-1} \int_{\widetilde{B}^+} \pi_1(y,s) dy = 0$ for all $s \in (t-1,t)$ and 
\[
\bv_1(y,s) = 0, \qquad (y,s) \in (\partial \widetilde{B}^+ \times [t-1,t]) \cup (\widetilde{B}^+ \times \set{t-1}).
\]
Due to Theorem 3.1 in Giga and Sohr \cite{GS} we have 
\begin{equation}
\label{E41}
\norm{\nabla \bv_1}_{\cL^{9/8,3/2}(\widetilde{Q}^+)} 
+ \norm{\pi_1}_{\cL^{9/8,3/2}(\widetilde{Q}^+)} 
+ \norm{\nabla \pi_1}_{\cL^{9/8,3/2}(\widetilde{Q}^+)} 
\lesssim \norm{(\bv \cdot \nabla) \bv}_{\cL^{9/8,3/2}(\widetilde{Q}^+)}. 
\end{equation}

Let 
\begin{equation}
\label{E42}
\bv_2 = \bv - \bv_1, \qquad \pi_2 = \pi - \inp{\pi} - \pi_1
\end{equation}
where $\inp{\pi} = |\frac{1}{2} B^+|^{-1} \int_{\frac{1}{2} B^+} \pi dy$.
Then $(\bv_2,\pi_2)$ satisfies 
\begin{align*}
\partial_t \bv_2 - \Delta \bv_2 + \nabla \pi_2 &= 0 \\
\divv \bv_2 &= 0
\end{align*}
in $\widetilde{Q}^+$ and 
\[
\bv_2(y,s) = 0, \qquad (y,s) \in (\partial \widetilde{B} \cap \set{x_3=0}) \times [t-1,t].
\]
Due to Proposition 2 in Seregin \cite{Se1}, we have 
\[
\norm{\nabla \pi_2}_{\cL^{9/2,3/2}(\frac{1}{4}Q^+)} 
\lesssim \norm{\pi_2}_{\cL^{9/8,3/2}(\frac{1}{2}Q^+)}
+ \norm{\bv_2}_{\cL^{9/8,3/2}(\frac{1}{2}Q^+)} 
+ \norm{\nabla \bv_2}_{\cL^{9/8,3/2}(\frac{1}{2}Q^+)}.
\]
Thus, we use \eqref{E42}, the Sobolev inequality, and \eqref{E41} to get 
\begin{equation}
\label{E43}
\begin{split}
\norm{\nabla \pi_2}_{\cL^{9/2,3/2}(\frac{1}{4}Q^+)} 
&\lesssim \norm{\pi - \inp{\pi}}_{\cL^{9/8,3/2}(\frac{1}{2}Q^+)} 
 + \norm{\pi_1}_{\cL^{9/8,3/2}(\frac{1}{2}Q^+)} \\
&\quad + \norm{\nabla \bv}_{\cL^{9/8,3/2}(\frac{1}{2}Q^+)} 
+ \norm{\nabla \bv_1}_{\cL^{9/8,3/2}(\frac{1}{2}Q^+)} \\
&\lesssim \norm{\pi - \inp{\pi}}_{\cL^{9/8,3/2}(\frac{1}{2}Q^+)} 
+ \norm{\nabla \bv}_{\cL^{9/8,3/2}(\frac{1}{2}Q^+)} 
+ \norm{(\bv \cdot \nabla) \bv}_{\cL^{9/8,3/2}(\widetilde{Q}^+)}
\end{split}
\end{equation}
We use the H\"older inequality and combine \eqref{E41} and \eqref{E43} to obtain that for $0 < \theta \le 1/4$ 
\begin{equation}
\label{E44}
\begin{split}
\norm{\nabla \pi}_{\cL^{9/8,3/2}(\theta Q^+)} 
&\le \norm{\nabla \pi_1}_{\cL^{9/8,3/2}(\theta Q^+)} 
+ \norm{\nabla \pi_2}_{\cL^{9/8,3/2}(\theta Q^+)} \\
&\lesssim \norm{\nabla \pi_1}_{\cL^{9/8,3/2}(\frac{1}{2}Q^+)} 
+ \theta^2 \norm{\nabla \pi_2}_{\cL^{9/2,3/2}(\theta Q^+)} \\
&\lesssim \norm{(\bv \cdot \nabla) \bv}_{\cL^{9/8,3/2}(\widetilde{Q}^+)}
+ \theta^2 \norm{\pi - \inp{\pi}}_{\cL^{9/8,3/2}(\frac{1}{2}Q^+)} 
+ \theta^2 \norm{\nabla \bv}_{\cL^{9/8,3/2}(\frac{1}{2}Q^+)}.
\end{split}
\end{equation}
Since $\bv$ vanishes on the boundary, we have by using the H\"older's inequality and the Sobolev inequality
\[
\norm{(\bv \cdot \nabla) \bv}_{9/8}
\le \norm{\bv}_2^{2/3} \norm{\bv}_6^{1/3} \norm{\nabla \bv}_2 
\lesssim \norm{\bv}_2^{2/3} \norm{\nabla \bv}_2^{4/3}. 
\] 
Integrating in time yields
\[
\norm{(\bv \cdot \nabla) \bv}_{\cL^{9/8,3/2}(\widetilde{Q}^+)}
\lesssim A(z,1)^{1/3} E(z,1)^{2/3}.
\]
By H\"older's inequality 
\[
\norm{\nabla \bv}_{\cL^{9/8,3/2}(\frac{1}{2}Q^+)} \lesssim \norm{\nabla \bv}_{\cL^{2,2}(\frac{1}{2}Q^+)} \lesssim E(z,1)^{1/2}.
\]
From \eqref{E44} we obtain
\[
\theta G(\theta) 
\le \theta^2 \norm{\pi - \inp{\pi}}_{\cL^{9/8,3/2}(\frac{1}{2}Q^+)} + \theta^2 E(z,1)^{1/2} + 
A(z,1)^{1/3} E(z,1)^{2/3}.
\]
This yields the result.
\end{proof}

\section{Proof of Theorem \ref{T}}
\label{S5}

In this section we prove Proposition \ref{P} and then deduce Theorem \ref{T} from it. 
When one investigate the Minkowski dimension of the singular points, a plausible strategy is combining the different scaled functionals to lower the power of $\rho$ in the right-hand side of \eqref{E51}. 
We note that Wang and Wu \cite{WW} observed that adding the term $|\nabla \pi|^{5/4}$ in \eqref{E51} is useful to get better bound for the Minkowski dimension of the interior singular points compared with the original argument in Koh and Yang \cite{KY}.
We adopt the same term in this boundary criterion and revise technical details of the iteration scheme due to the different decaying behavior of the scaled functional of the pressure near the boundary.

\begin{prop}
\label{P}
There exists a positive number $\cE<1$ such that the point $z \in \Gamma_T$ is regular if for some positive number $\rho < 2^{-12}$
\begin{equation}
\label{E51}
\int_{Q^+(z,\rho)} |\bv|^{10/3} + |\nabla \bv|^2 + |\pi-\inp{\pi}_{B^+(x,\rho)}|^{5/3} + |\nabla \pi|^{5/4} dxdt < \rho^{3/2} \cE.
\end{equation}
\end{prop}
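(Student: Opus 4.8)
The plan is to reduce the Proposition to the $\varepsilon$--regularity criterion of Lemma \ref{L1} and to reach its hypothesis by running an iteration of the scaled functionals $A,E,F,G$ down a geometric sequence of radii. Write $\Phi(z,r)$ for the integral on the left of \eqref{E51}. Since the right-hand side of Lemma \ref{L2} is precisely $K_1 r^{-1/2}\Phi(z,r)$, the hypothesis gives immediately
\[
A(z,\rho/2)+E(z,\rho/2)\le K_1\rho^{-3/2}\Phi(z,\rho)<K_1\cE ,
\]
and then Lemma \ref{L3} with $\theta=1$ yields $F(z,\rho/2)\le K_2 A(z,\rho/2)^{1/3}E(z,\rho/2)^{2/3}\lesssim\cE$. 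Thus $A,E,F$ are already small at the initial scale, and the whole problem is to propagate this smallness down to a radius $R$ at which $Y(z,R)=F(z,R)+G(z,R)<\varepsilon$.

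I would build the iteration from three estimates. For the pressure the key remark is that, applied slicewise in time on the half--ball, the Poincar\'e inequality gives $r^{-2}\norm{\pi-\inp{\pi}}_{\cL^{9/8,3/2}(Q^+(z,r))}\lesssim G(z,r)$, so the pressure--oscillation term of Lemma \ref{L4} is absorbed into $G(z,r)$ and that lemma becomes the self--improving estimate
\[
G(z,\theta r)\le C\theta\, G(z,r)+C\theta\, E(z,r)^{1/2}+C\theta^{-1}A(z,r)^{1/3}E(z,r)^{2/3}.
\]
For the velocity I would complement Lemma \ref{L3} with a scale--invariant form of the local energy inequality \eqref{E31}: using a cutoff adapted to the inner cylinder and estimating the quadratic term by H\"older rather than by Poincar\'e, and reducing the resulting pressure integral to $G$ exactly as in the proof of Lemma \ref{L1}, one obtains
\[
A(z,\theta r)+E(z,\theta r)\lesssim\theta^{-1}\left(F(z,r)+F(z,r)^{3/2}+F(z,r)^{1/2}G(z,r)\right).
\]
Together with Lemma \ref{L3} these close a coupled system in $(A,E,F,G)$ that feeds on itself across consecutive scales.

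The hard part is to extract a genuine contraction from this system. Each of the three estimates carries an unfavourable power of $\theta$, so the plain sum $A+E+F+G$ need not decay; instead I would track a suitably weighted functional $\mathcal{Y}(r)$ and beat the $\theta$--losses using the super--linearity of the forcing terms (the exponents $3/2$ and $1/3$, $2/3$) together with the super--critical weight $\rho^{3/2}$ in \eqref{E51}, which makes the dissipation and the pressure gradient at scale $\rho$ smaller than scale invariance alone would require. A separate subtlety is that $G$ is not small at the initial scale; it is controlled only because it enters its own recursion with the contractive coefficient $C\theta<1$, so that $(C\theta)^k G(z,\rho/2)\to0$ while the forcing, fed by the small velocity functionals, stays of order $\cE^{1/2}$. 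The technically heaviest input behind all of this is the pressure decay of Lemma \ref{L4}, whose Stokes decomposition is what reconciles the several pressure integrabilities ($5/3$ and $5/4$ in $\Phi$ against the mixed norm $\cL^{9/8,3/2}$ in $G$). Once the weights and the ratio $\theta$ are tuned so that $\mathcal{Y}(\theta r)\le\lambda\mathcal{Y}(r)$ with $\lambda<1$ holds below a threshold fixed by $\cE$, iterating forces $Y(z,\theta^k\rho/2)<\varepsilon$ for some $k$, and Lemma \ref{L1} gives regularity at $z$.
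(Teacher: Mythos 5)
Your reduction to Lemma \ref{L1} and your Step-1 observations (smallness of $A$, $E$, $F$ at the initial scale via Lemma \ref{L2}) match the paper. But the core of your argument --- an infinite iteration at a fixed ratio $\theta$ driven by a weighted contraction --- does not close, and the paper's proof is genuinely different. The decisive point is that the forcing term $A^{1/3}E^{2/3}$ in Lemmas \ref{L3} and \ref{L4} is homogeneous of degree \emph{one} in $(A,E)$, not superlinear, so the ``super-linearity of the exponents $1/3$, $2/3$'' you invoke is not available to beat the $\theta$-losses. Concretely, your energy estimate contains a term linear in $F(r)$, and with a worse power than you claim: $\int_{Q^+(r)}|\bv|^2\le|Q^+(r)|^{1/3}\norm{\bv}_{L^3(Q^+(r))}^2=r^3F(r)$, and after multiplying by $(\theta r)^{-2}$ from the cutoff and normalizing by $(\theta r)^{-1}$ you lose $\theta^{-3}$, not $\theta^{-1}$; meanwhile Lemma \ref{L3} returns $F(\theta r)\le K_2\theta^{-1}A(r)^{1/3}E(r)^{2/3}\le K_2\theta^{-1}(A(r)+E(r))$. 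The two-cycle $F\to(A,E)\to F$ therefore expands by a factor $\gtrsim\theta^{-4}$ per cycle, and since this part of the system is \emph{linear}, no choice of weights in your functional $\mathcal{Y}$ can make its spectral radius less than one for any fixed $\theta<1$. This is precisely why a single-scale hypothesis such as \eqref{E51} cannot be propagated down infinitely many scales (unlike CKN-type hypotheses, which are small at \emph{every} scale). The paper instead makes a \emph{single} two-scale jump and spends the supercritical power $\rho^{3/2}$ exactly once: it sets $\alpha=7/6$ and lets the ratio depend on $\rho$, namely $\theta=\rho^{1/6}$, applies Lemmas \ref{L3} and \ref{L4} once between the radii $\rho^{\alpha}$ and $\theta\rho^{\alpha}=\rho^{4/3}$, and uses the elementary monotonicity $A(\rho^{\alpha})\le\rho^{1-\alpha}A(\rho)$, $E(\rho^{\alpha})\le\rho^{1-\alpha}E(\rho)$ together with $E(\rho)\le4\rho^{1/2}\cE$, $A(\rho)\le K_1\cE$; the exponents $\alpha=7/6$, $\beta=1/6$ are tuned so that every power of $\rho$ cancels identically, giving $Y(z,\rho^{4/3})\le K_4\cE^{1/2}<\varepsilon$ with no iteration at all.

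There is a second gap at the initialization of your recursion for the pressure. Absorbing the oscillation term of Lemma \ref{L4} into $G(z,r)$ by Poincar\'e is a correct inequality, but it makes the scheme start from $G(z,\rho/2)$, which hypothesis \eqref{E51} does \emph{not} control: H\"older lowers the space exponent from $5/4$ to $9/8$, but the time exponent $3/2$ in $\cL^{9/8,3/2}$ exceeds $5/4$, so $\norm{\nabla\pi}_{\cL^{9/8,3/2}(Q^+(z,\rho))}$ is not dominated by $\left(\int_{Q^+(z,\rho)}|\nabla\pi|^{5/4}\right)^{4/5}$, and local finiteness of this mixed norm is not part of the definition of boundary suitable weak solutions adopted from \cite{SS} (only $\pi\in L^{3/2}$ is assumed). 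Your remark that $G$ need only enter its own recursion with coefficient $C\theta<1$ presumes $G$ finite and the forcing small at every scale, which is exactly what is in question. The paper's Lemma \ref{L4} keeps $\norm{\pi-\inp{\pi}}_{\cL^{9/8,3/2}}$ on the right-hand side precisely so that $G$ never appears at the starting scale; the hypothesis then controls this oscillation through the slicewise computation of Step 2 --- H\"older in space with the volume factor $\left(\int_{B^+}1\,dy\right)^{1/2}$ and the Sobolev embedding of $W^{1,5/4}$ into $L^{15/7}$, performed pointwise in time \emph{before} integrating in time --- which is what reconciles the mismatched time exponents. (A smaller problem of the same type: your $F^{1/2}G$ term would require pairing $\cL^{9/8,3/2}$ against $L^3$ in space, but $8/9+1/3>1$, so that H\"older pairing is unavailable.) As written, then, your scheme does not yield the Proposition; repairing it essentially forces you back to the paper's one-step, $\rho$-dependent-$\theta$ argument.
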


We divide the proof of Proposition \ref{P} into a few steps.
We suppress $z$ as a matter of convenience. 

\begin{proof}
\begin{enumerate}[\bf{Step} 1)]
\item
Suppose that for some fixed positive number $\rho < 2^{-12}$
\begin{equation}
\label{E52}
\int_{Q^+(2\rho)} |\bv|^{10/3} + |\nabla \bv|^2 + |\pi-\inp{\pi}_{B^+(x,2\rho)}|^{5/3} + |\nabla \pi|^{5/4} dxdt < (2\rho)^{3/2} \cE.
\end{equation}
Then, from the definition $E(\rho) = \rho^{-1} \int_{Q^+(\rho)} |\nabla \bv|^2 dxdt$, we have 
\begin{equation}
\label{E53}
E(\rho) < 4 \rho^{1/2} \cE.
\end{equation}
By Lemma \ref{L2} and \eqref{E52} we also have
\[
A(\rho) 
\le K_1 (2\rho)^{-3/2} \int_{Q^+(2\rho)} |\bv|^{10/3} + |\nabla \bv|^2 + |\pi-\inp{\pi}_{B^+(x,2\rho)}|^{5/3} + |\nabla\pi|^{5/4}
\]
and hence 
\begin{equation}
\label{E54}
A(\rho) < K_1 \cE.
\end{equation}
\item
Let 
\begin{equation}
\label{E55}
\alpha := \frac{7}{6}, \qquad \beta := \frac{1}{6}, \qquad \theta := \rho^\beta < (2^{-12})^{1/6} = \frac{1}{4}.
\end{equation}
Combining Lemma \ref{L3} and Lemma \ref{L4} we obtain that 
\begin{equation}
\label{E56}
\begin{split}
Y(\theta \rho^{\alpha}) 
&\le K_3 \theta \rho^{-2\alpha} \norm{\pi-\inp{\pi}}_{\cL^{9/8,3/2}(Q^+(\rho^{\alpha}))} \\
&\quad + K_3 \theta E(\rho^{\alpha})^{1/2} + (K_2+K_3) \theta^{-1} A(\rho^{\alpha})^{1/3} E(\rho^{\alpha})^{2/3}.
\end{split}
\end{equation}
We now estimate the first term on the right as follows.
Using the H\"older inequality, the Sobolev inequality, and the Young inequality, we obtain 
\begin{align*}
&\left(\int_{B^+(\rho^{\alpha})} |\pi-\inp{\pi}|^{9/8}\right)^{4/3} \\
&\le 
\left(\int_{B^+(\rho^{\alpha})} 1\right)^{1/2}
\left(\int_{B^+(\rho^{\alpha})} |\pi-\inp{\pi}|^{5/3}\right)^{3/5}
\left(\int_{B^+(\rho^{\alpha})} |\pi-\inp{\pi}|^{15/7}\right)^{7/30} \\
&\le C \rho^{3\alpha/2}
\left(\int_{B^+(\rho^{\alpha})} |\pi-\inp{\pi}|^{5/3}\right)^{3/5}
\left(\int_{B^+(\rho^{\alpha})} |\nabla\pi|^{5/4}\right)^{2/5} \\
&\le C \rho^{3\alpha/2} \int_{B^+(\rho^{\alpha})} |\pi-\inp{\pi}|^{5/3} + |\nabla\pi|^{5/4}
\end{align*}
and hence 
\begin{align*}
&\rho^{-\alpha} \norm{\pi-\inp{\pi}}_{\cL^{9/8,3/2}(Q^+(\rho^{\alpha}))} \\
&= \rho^{-\alpha} \left(\int_{t-\rho^{2\alpha}}^t \left(\int_{B^+(\rho^{\alpha})} |\pi-\inp{\pi}|^{9/8}\right)^{4/3}\right)^{2/3} \\
&\le C \left(\int_{Q^+(\rho^\alpha)} |\pi-\inp{\pi}|^{5/3} + |\nabla\pi|^{5/4}\right)^{2/3}.
\end{align*}
Thus, from \eqref{E56}, we have  
\begin{equation}
\label{E57}
\begin{split}
Y(\theta \rho^{\alpha}) 
&\le C K_3 \theta \rho^{-\alpha} \left(\int_{Q^+(\rho^\alpha)} |\pi-\inp{\pi}|^{5/3} + |\nabla\pi|^{5/4}\right)^{2/3} \\
&\quad + K_3 \theta E(\rho^{\alpha})^{1/2} + (K_2+K_3) \theta^{-1} A(\rho^{\alpha})^{1/3} E(\rho^{\alpha})^{2/3} \\
&=: I + II + III.
\end{split}
\end{equation}
\item
We shall estimate $I$, $II$, and $III$.
Using \eqref{E52} and \eqref{E55} we have
\begin{equation}
\label{E58}
I 
\le C K_3 \rho^\beta \rho^{-\alpha} ((2\rho)^{3/2} \cE)^{2/3} \\
\le 4C K_3 \cE^{2/3}.
\end{equation}
Using \eqref{E53} and \eqref{E55} we have
\begin{equation}
\label{E59}
\begin{split}
II
&= K_3 \theta E(\rho^\alpha)^{1/2} \\
&\le K_3 \theta \Big(\frac{\rho}{\rho^\alpha}\Big)^{1/2} E(\rho)^{1/2} \\
&\le K_3 \rho^\beta \rho^{(1-\alpha)/2} (4 \rho^{1/2} \cE)^{1/2} \\
&\le 2 K_3 \cE^{1/2}.
\end{split}
\end{equation}
Similarly, using \eqref{E53}, \eqref{E54}, and \eqref{E55}, we obtain 
\begin{equation}
\label{E510}
\begin{split}
III
&= (K_2+K_3) \theta^{-1} A(\rho^\alpha)^{1/3} E(\rho^\alpha)^{2/3} \\
&\le (K_2+K_3) \theta^{-1} \Big(\frac{\rho}{\rho^\alpha}\Big)^{1/3} A(\rho)^{1/3} \Big(\frac{\rho}{\rho^\alpha}\Big)^{2/3} E(\rho)^{2/3} \\
&\le (K_2+K_3) \rho^{-\beta} \rho^{1-\alpha} (K_1 \cE)^{1/3} (4 \rho^{1/2} \cE)^{2/3} \\
&\le K_1^{1/3} (K_2+K_3) \cE.
\end{split}
\end{equation}
\item
Finally, we set $K_4 = C K_3 + 2 K_3 + K_1^{1/3} (K_2+K_3)$ and 
\[
\cE = \frac{1}{2} \min\set{1, (\varepsilon/K_4)^2}
\]
where $\varepsilon$ is the absolute number in Lemma \ref{L1}.
Then from \eqref{E57}, \eqref{E58}, \eqref{E59}, and \eqref{E510}, we conclude that 
\[
Y(z,\theta \rho^{\alpha}) 
\le C K_3 \cE^{2/3} + 2 K_3 \cE^{1/2} + K_1^{1/3} (K_2+K_3) \cE 
\le K_4 \cE^{1/2} < \varepsilon.
\]
By Lemma \ref{L1} $\bv$ is regular at a boundary point $z \in \Gamma_T$ and this completes the proof of Proposition \ref{P}. 
\end{enumerate}
\end{proof}

\begin{proof}[Proof of Theorem \ref{T}]
We may consider the set $\cS$ of boundary singular points in the unit cylinder $Q$.
Proposition \ref{P} implies that if $z$ is a boundary singular point, 
then for all $r < 2^{-12}$
\[
\cE r^{3/2}
\le \int_{Q^+(r)} |\bv|^{10/3} + |\nabla \bv|^2 + |\pi-\inp{\pi}|^{5/3} + |\nabla \pi|^{5/4} dxdt.
\]
Fix $5r < 2^{-12}$ and consider the covering $\set{Q(r) : z \in \cS}$.
By the Vitali covering lemma, there is a finite disjoint sub-family
\[
\set{Q(z_j,r) : j=1,2,\dots,M}
\]
such that $\cS \subset \bigcup Q(z_j,5r)$.
Summing the inequality above at $z_j$ for $j=1,2,\dots,M$ yields
\begin{align*}
M \cE r^{3/2}
&\le \sum_{i=1}^{M} \int_{Q^+(z_j,r)} |\bv|^{10/3} + |\nabla \bv|^2 + |\pi-\inp{\pi}|^{5/3} + |\nabla \pi|^{5/4} dxdt \\
&\le \int_{Q^+} |\bv|^{10/3} + |\nabla \bv|^2 + |\pi-\inp{\pi}|^{5/3} + |\nabla \pi|^{5/4} dxdt =: K_5 < \infty.
\end{align*}
We denote by $N(r)$ the minimum number of parabolic cylinders $Q(r)$ required to cover the set $\cS$.
Since $N(r) \le M \le K_5 \cE^{-1} r^{-3/2}$, we conclude that 
\[
\limsup_{r\to0} \frac{\log N(r)}{-\log r} \le \frac{3}{2}.
\]
This completes the proof of Theorem \ref{T}.
\end{proof}

\section*{Acknowledgment}

The authors would like to express their sincere gratitude to Dr. Yanqing Wang for pointing out a rough estimate in Lemma 3 in the first draft. 
Due to his comment, the result of this manuscript is considerable improved.
Hi Jun Choe has been supported by the National Research Foundation of Korea (NRF) grant funded by the Korea government(MSIP) (No. 2015R1A5A1009350).
Minsuk Yang has been supported by the National Research Foundation of Korea (NRF) grant funded by the Korea government(MSIP) (No. 2016R1C1B2015731) and (No. 2015R1A5A1009350).

%\section*{References}


\begin{thebibliography}{10}

\bibitem{CKN}
L. Caffarelli, R. V. Kohn, L. Nirenberg,
{\em Partial regularity of suitable weak solutions of the {N}avier-{Sch}tokes equations}, Comm. Pure Appl. Math. {\bf 35} (1982) 771--831.

\bibitem{CL}
H. J. Choe, J. L. Lewis, 
{\em On the singular set in the {N}avier-{Sch}tokes equations}, 
J. Funct. Anal. 175 (2000) 348--369.

\bibitem{ESS}
L. Escauriaza, G. A. Seregin, V. Sverak, 
{\em On backward uniquness for parabolic equations}, 
Arch. Rational Mech. Anal., 169 (2003), 145-157.

\bibitem{F}
K. Falconer, 
Fractal geometry, 3rd Edition, John Wiley \& Sons, Ltd., Chichester, 2014, mathematical foundations and applications.

\bibitem{GS}
Y. Giga, H. Sohr, 
{\em Abstract $L^p$ estimates for the Cauchy problem with applications to the Navier?Stokes equations in exterior domains}, 
J. Funct. Anal. 102 (1991) 72-94.

\bibitem{GKT}
S. Gustafson, K. Kang, T. P. Tsai,
{\em Regularity criteria for suitable weak solutions of the Navier--Stokes equations near the boundary},
J. Differ. Equ. 226 (2006) 594-618.

\bibitem{H}
E. Hopf, 
{\em Uber die Anfangswertaufgabe fur die hydrodynamischen Grundgleichungen}. 
Math. Nachr. 4, (1950), 213-231.

\bibitem{Kang}
K. Kang, 
{\em Unbounded normal derivative for the Stokes system near boundary}, 
Mathemtische Annalen 331 (2005), 87-109.

\bibitem{Kato}
T. Kato, 
{\em Strong Lp-solutions of the Navier-Stokes equations in Rn with applications to weak solutions}, 
Math. Z., 197 (1984), 471-480.

\bibitem{KY}
Y. Koh, M. Yang,
{\em The Minkowski dimension of interior singular points in the incompressible Navier--Stokes equations},
J. Differ. Equ. 261 (2016) 3137-3148.

\bibitem{Ku}
I. Kukavica, 
{\em The fractal dimension of the singular set for solutions of the Navier-Stokes system},
Nonlinearity 22 (2009) 2889--2900.

\bibitem{KP}
I. Kukavica, Y. Pei, 
{\em An estimate on the parabolic fractal dimension of the singular set for solutions of the {N}avier-{Sch}tokes system}, 
Nonlinearity 25 (2012) 2775--2783.

\bibitem{LS}
O. A. Ladyzhenskaya, G. A. Seregin,
{\em On partial regularity of suitable weak solutions to the three-dimensional {N}avier-{Sch}tokes equations}, 
J. Math. Fluid Mech. 1 (1999) 356--387.

\bibitem{Leray}
J. Leray, 
{\em Sur le mouvement d\'eun liquide visqueux emplissant l\'espace}, 
Acta Math., 63 (1934), 193-248.

\bibitem{Lin}
F. Lin,
{\em A new proof of the {C}affarelli-{K}ohn-{N}irenberg theorem}, 
Comm. Pure Appl. Math. 51~(3) (1998) 241--257.

\bibitem{RS}
J. C. Robinson, W. Sadowski,
{\em On the dimension of the singular set of solutions to the {N}avier-{Sch}tokes equations}, 
Comm. Math. Phys. 309~(2) (2012) 497--506.

\bibitem{Sch}
V. Scheffer, 
{\em Hausdorff measure and the {N}avier-{Sch}tokes equations}, 
Comm. Math. Phys. 55~(2) (1977) 97--112.

\bibitem{Se1}
G. A. Seregin, 
{\em Some estimates near the boundary for solutions to the non-stationary linearized Navier--Stokes equations}, 
Zap. Nauchn. Sem. S.-Peterburg. Otdel. Mat. Inst. Steklov. (POMI) 271 (2000) 204-223.

\bibitem{Se2}
G. A. Seregin, 
{\em Local regularity of suitable weak solutions to the Navier-Stokes equations near the boundary}, 
Journal of Mathematical Fluid Mechanics 4 (2002) 1-29.

\bibitem{Se3}
G. A. Seregin, 
{\em A note on local boundary regularity for the Stokes system}, 
Zap. Nauchn. Semin. POMI 370 (2009), 151-159.

\bibitem{SS}
G. A. Seregin, T. N. Shilkin,
{\em The local regularity theory for the Navier--Stokes equations near the boundary},
arXiv:1402.7181.

\bibitem{SSS}
G. A. Seregin, T. N. Shilkin, V. A. Solonnikov,
{\em Partial boundary regularity for the Navier--Stokes equations},
Journal of Mathematical Sciences, 132 (2006) 339-358.

\bibitem{WW}
Y. Wang, G. Wu,
{\em On the box-counting dimension of the potential singular set for suitable weak solutions to the 3D Navier--Stokes equations},
Nonlinearity 30 (2017) 1762-1772.

\end{thebibliography}
\end{document}